\documentclass[letterpaper, 10pt, conference]{ieeeconf}      

\IEEEoverridecommandlockouts                              
\usepackage{amsthm}
\usepackage{xcolor}
\usepackage{amsmath}
\usepackage{amssymb}
\usepackage{comment}
\usepackage{amsfonts}
\usepackage{graphicx}
\usepackage{subcaption}
\usepackage{romannum}
\let\labelindent\relax
\usepackage{enumitem}
\usepackage{algorithm}
\usepackage{algpseudocode}

\newcommand{\R}{\mathbb{R}}
\newcommand{\mc}[1]{\mathcal{#1}}
\newcommand{\mb}[1]{\mathbf{#1}}
\newcommand{\mbb}[1]{\mathbb{#1}}
\newcommand{\bx}{\mathbf{x}}
\newtheorem{cor}{Corollary}
\newcommand{\RN}[1]{\mathrm{\uppercase\expandafter{\romannumeral#1}}}
\newtheorem{assump}{Assumption}
\newtheorem{thm}{Theorem}
\newtheorem{remark}{Remark}
\newenvironment{pf}{\begin{proof}}{\end{proof}}

\title{\LARGE \bf
Bundle EXTRA for Decentralized Optimization
}

\author{Haijuan Liu, Zhuoqing Zheng, Cong Li, Wenying Xu, and Xuyang Wu
\thanks{H. Liu, Z. Zheng, C. Li and X. Wu are with the School of Automation and Intelligent Manufacturing, Southern University of Science and Technology, Shenzhen, China, and the State Key Laboratory of Autonomous Intelligent Unmanned Systems, Beijing 100081, China. Email: {\tt\small 12431368@mail.sustech.edu.cn; 12433033@mail.sustech.edu.cn; licong@sustech} {\tt\small.edu.cn; wuxy6@sustech.edu.cn}}
\thanks{W. Xu is with the School of Mathematics, Southeast University, Nanjing 211189, China. Email: {\tt\small wyxu@seu.edu.cn}}
\thanks{This work is supported in part by the Guangdong Provincial Key Laboratory of Fully Actuated System Control Theory and Technology under grant No. 2024B1212010002, in part by the Shenzhen Science and Technology Program under grant No. JCYJ20241202125309014, in part by the Shenzhen Science and Technology Program 
under grant No. KQTD20221101093557010, and in part by the Guangdong Basic and Applied Basic Research Foundation under Grant No. 2026A1515012017.}
}

\begin{document}

\maketitle
\thispagestyle{empty}
\pagestyle{empty}

\begin{abstract}
Decentralized primal-dual methods are widely used for solving decentralized optimization problems, but their updates often rely on the potentially crude first-order Taylor approximations of the objective functions, which can limit convergence speed. To overcome this, we replace the first-order Taylor approximation in the primal update of EXTRA, which can be interpreted as a primal-dual method, with a more accurate multi-cut bundle model, resulting in a fully decentralized bundle EXTRA method. The bundle model incorporates historical information to improve the approximation accuracy, potentially leading to faster convergence. Under mild assumptions, we show that a KKT residual converges to zero. Numerical experiments on decentralized least-squares problems demonstrate that, compared to EXTRA, the bundle EXTRA method converges faster and is more robust to step-size choices.
\end{abstract}
\begin{keywords}
  Bundle method, EXTRA, decentralized optimization, consensus optimization.
\end{keywords}

\section{INTRODUCTION}
Decentralized optimization employs a network of agents to solve a global optimization problem, where each agent can communicate only with its neighbors. This problem has attracted much attention in the last decades due to its broad applications in diverse research areas such as distributed control \cite{olfati2007con} and distributed machine learning \cite{lian2017can}.

In the category of decentralized optimization methods, early methods are mainly primal and based on the consensus operation, such as the decentralized gradient descent method (DGD) \cite{nedic2009distributed,yuan2016convergence,Jakovetic2014fast}. Although these methods can be well adapted to a wide range of scenarios, such as time-varying network \cite{nedic2009distributed}, stochastic optimization \cite{lian2017can}, and asynchronous optimization \cite{assran2020asynchronous,wu2026asynchronous}, their convergence properties are unsatisfactory. In particular, when using fixed step-sizes, these methods are only guaranteed to converge to a sub-optimal solution. Although theoretically these methods converge to the optimum when using diminishing step-sizes, the quickly vanishing step-size often yields slow convergence.

To achieve faster convergence, a list of advanced methods \cite{shi2015extra,nedic2017achieving,shi2015proximal,Nedic2017geometrically,Qu2018Harnessing,xi2017DEXTRA} are proposed, such as EXTRA \cite{shi2015extra}, DIGing \cite{Nedic2017geometrically}, and several primal-dual methods \cite{alghunaim2020linear,Mak2017con,lei2016primal,aybat2017distributed}, where EXTRA and DIGing are also shown to have a primal-dual interpretation \cite{wu2022unifying}. These methods can converge to the exact optimum with constant step-sizes and enjoy faster convergence both theoretically and empirically. However, most of their updates involve approximating the primal/dual function by its first-order Taylor expansion, which can yield a crude approximation and prevent faster convergence. With this observation, a natural idea for accelerating these methods is to use a more accurate surrogate. 

In this paper, we view EXTRA as a primal-dual method, and replace the first-order Taylor expansion of the objective function with a surrogate function referred to as the \emph{bundle model} in its primal update, leading to a decentralized bundle EXTRA method. Our method allows for several choices of the bundle model, which incorporate lower bounds of the objective function or historical function values and gradients to improve the accuracy of the surrogate. The main contributions of this paper are summarized as follows:
\begin{enumerate}
    \item We adapt the bundle technique in centralized optimization to decentralized optimization to accelerate the convergence of EXTRA. Although an earlier work \cite{wang2017decentralized} also applies the bundle technique in decentralized optimization, it focuses on non-smooth problems, allows only one specific bundle model, and does not provide any theoretical convergence guarantees.
    \item Under mild assumptions, we prove an $O(1/k)$ convergence rate of the bundle EXTRA method.
    \item Numerical experiments on decentralized least squares demonstrate that the proposed method not only converges faster but also is more robust in the step-sizes compared to  EXTRA \cite{shi2015extra}.
\end{enumerate}

The remainder of this paper is organized as follows: Section \ref{sec:Con_EXTRA} introduces consensus optimization and the EXTRA algorithm. Section \ref{sec:B_EXTRA} develops the bundle EXTRA algorithm and section \ref{sec:Con_analysis} analyses its convergence. Section \ref{sec:Num_ex} presents numerical experiments and section \ref{sec:Conclu} concludes the paper.

\textbf{Notations and definitions.}
We use $\mathbb{R}^d$ and $\mathbb{R}^{n\times d}$ to denote the $d$-dimensional Euclidean space and the set of $n\times d$ real matrices, respectively. Additionally, $\langle \cdot,\cdot \rangle$ and $\|\cdot\|$ represent the inner product and the Euclidean norm, respectively.
For any matrix $A\in\R^{n\times d}$, $\operatorname{Null}(A)=\{x\in\R^d|Ax=0\}$ and $\operatorname{Range}(A) = \{ Ax \mid x \in \mathbb{R}^d \}$ are the null space and range of $A$, respectively, and $A^\dagger$ denotes its Moore--Penrose pseudoinverse. For two matrices $A, B\in \R^{n\times n}$, $A\succ B$ ($A\succeq B$) means that $A-B$ is positive definite (positive semidefinite). We use  $\mb{1}_d$, $\mb{0}_{n\times d}$, and $I_n$ to denote the $d$-dimensional all-one vector, the $n\times d$ zero matrix and the $n$ dimensional identity matrix, respectively, and ignore the subscripts when they are clear from the context. For any $x\in\mathbb{R}^n$ and symmetric and positive semidefinite matrix $A\in\R^{n\times n}$, the weighted norm $\|x\|_A= \sqrt{x^T A x}$. For any vector $v\in\R^d$, we use $\operatorname{span}(v)$ to denote its span. For any $f: \R^d \to \R$, $\partial{f}(\cdot)$ represents the subdifferential of $f$, and we say it is $L$-smooth for some $L>0$ if it is differentiable and
\[\|\nabla f(y)-\nabla f(x)\|\le L\|y-x\|, \forall~x, y\in\R^d.\]

\section{Consensus Optimization and EXTRA}\label{sec:Con_EXTRA}

This section first formulates the consensus optimization problem, and then reviews EXTRA that is a foundation of the algorithm development in Section \ref{sec:B_EXTRA}.

\subsection{Consensus optimization}

Consider a network $\mc{G}=(\mc{V}, \mc{E})$ of agents, where $\mc{V}=\{1,\ldots,n\}$ is the vertex set and $\mc{E}$ is the edge set. Consensus optimization solves the following problem through the collaboration of all agents:
\begin{equation}\label{eq:prob}
\begin{aligned}
&\underset{x_i\in\R^d}{\operatorname{minimize}}~\sum_{i=1}^n f_i(x_i), \\
&\operatorname{subject~to}~x_1=x_2=\cdots=x_n,
\end{aligned}
\end{equation}
We aim to solve \eqref{eq:prob} over a network $\mc{G}=(\mc{V}, \mc{E})$ of nodes, where $\mc{V}=\{1,\ldots,n\}$ is the vertex set and $\mc{E}$ is the edge set. To this end, we make the following assumptions.

\begin{assump}\label{assu:convex}
    The objective function $f_i$ is proper, closed, convex, and $L$-smooth for some $L>0$.
\end{assump}
\begin{assump}\label{assu:sol_exists}
    There exists at least one optimal solution to problem \eqref{eq:prob}.
\end{assump}
\begin{assump}\label{assu:connected}
    The network $\mc{G}$ is undirected and connected.
\end{assump}

Assumptions \ref{assu:convex}-\ref{assu:connected} are standard in decentralized optimization and are required in the convergence analysis of many typical decentralized optimization methods, such as DGD \cite{yuan2016convergence}, EXTRA \cite{shi2015extra}, and DIGing \cite{nedic2017achieving}. 

\subsection{EXTRA}\label{ssec:extra}

EXTRA is a typical decentralized method for solving problem \eqref{eq:prob}. To introduce it, we define
\begin{equation}
\begin{split}
    &\bx = \begin{pmatrix}
        x_1^T\\
        \vdots\\
        x_n^T
    \end{pmatrix}\in\mathbb{R}^{n\times d},\quad f(\bx)=\sum_{i=1}^n f_i(x_i),\\
    &\nabla f(\bx) = \begin{pmatrix}
        (\nabla f_1(x_1))^T\\
        \vdots\\
        (\nabla f_n(x_n))^T
    \end{pmatrix}\in\mathbb{R}^{n\times d}.
\end{split}
\end{equation}
Then, EXTRA can be described as:
\begin{equation}\label{eq:EXTRA_init}
    \mathbf{x}^1=W\mathbf{x}^0 - \alpha \nabla f(\mathbf{x}^0).
\end{equation}
For each $k\ge 0$,
\begin{equation}\label{eq:original_extra}
\begin{aligned}
    \mathbf{x}^{k+2}=&(I+W)\mathbf{x}^{k+1}-\tilde{W}\mathbf{x}^k\\
    &-\alpha(\nabla f(\mathbf{x}^{k+1})-\nabla f(\mathbf{x}^k)),
\end{aligned}
\end{equation}
where $W,\tilde{W}\in \mathbb{R}^{n\times n}$ are two weight matrices satisfying the following assumption.
\begin{assump}\label{asm:weight_matrices}
The matrices \( W = [w_{ij}]_{n \times n} \) and \( \tilde{W} = [\tilde{w}_{ij}]_{n \times n} \) satisfy
    \begin{enumerate}[label=(\alph*)]
        \item (Decentralized) If \( i \neq j \) and \((i, j) \notin \mathcal{E} \), then \( w_{ij} = \tilde{w}_{ij} = 0 \).
        \item (Symmetry) \( W = W^T, \, \tilde{W} = \tilde{W}^T \).
        \item (Null space) \(\operatorname{Null}(W-\tilde{W})= \operatorname{span}(\mathbf{1})\), \(\operatorname{Null}(I-\tilde{W})\supseteq \operatorname{span}(\mathbf{1})\).
        \item (Spectral) \( \tilde{W} \succ \mathbf{0}\) and \( \frac{I + \tilde{W}}{2} \succeq\tilde{W} \succeq W\).
    \end{enumerate}
\end{assump}
For simplicity, throughout this paper we set $\tilde{W}=\frac{W+I}{2}$ so that the EXTRA update \eqref{eq:original_extra} becomes
\begin{equation}\label{eq:Wtilde_W_extra}
\begin{aligned}
    \mathbf{x}^{k+2}=2\tilde{W}\mathbf{x}^{k+1}\!-\!\tilde{W}\mathbf{x}^k-\alpha\big(\nabla f(\mathbf{x}^{k+1})-\nabla f(\mathbf{x}^k)\big).
\end{aligned}
\end{equation}
With the setting $\tilde{W}=(W+I)/2$, Assumption \ref{asm:weight_matrices} can be guaranteed if Assumption \ref{assu:connected} holds and $W$ satisfies 1) Assumption \ref{asm:weight_matrices}(a)--(b); 2) $w_{ij}>0$ for all $i=j$ and $\{i,j\}\in\mc{E}$; 3) $W\mb{1}=\mb{1}$. Typical choices of $W$ include
\begin{enumerate}   
    \item \textbf{Metropolis weights.}
    Let $d_i$ be the degree of agent $i$. The $ij$-element of $W$ is
    \begin{equation*}
    w_{ij}=
    \begin{cases}
    \frac{1}{1+\max\{d_i,d_j\}}, & \{i,j\}\in\mathcal E,\ i\neq j,\\
    0, & \{i,j\}\notin\mathcal E,\ i\neq j,\\
    1-\sum_{j\in\mathcal N_i} w_{ij}, & i=j.
    \end{cases}
    \end{equation*}
    \item\textbf{Laplacian-based weights \cite{XIAO200465}.}
    Let $L_{\mc{G}}$ be the Laplacian matrix of $\mc{G}$ where the $ij$-element is
    \begin{equation*}
    [L_{\mc{G}}]_{ij}=
    \begin{cases}
    -1 & \{i,j\}\in\mathcal E,\ i\neq j,\\
    0, & \{i,j\}\notin\mathcal E,\ i\neq j,\\
    d_i, & i=j.
    \end{cases}
    \end{equation*}
    The matrix is
    \[
    W = I - \tau L_{\mc{G}},
    \]
    where $0 < \tau < 2/\lambda_{\max}(L_{\mc{G}})$ with $\lambda_{\max}(L_{\mc{G}})$ being the largest eigenvalue of $L_{\mc{G}}$.
\end{enumerate}
More options of $W$ are discussed in 
\cite[Section 2.4]{shi2015extra}.

It is shown that the EXTRA update \eqref{eq:Wtilde_W_extra} with the initialization \eqref{eq:EXTRA_init} can be rewritten as a primal-dual update \cite{wu2022unifying}:
\begin{align}
    \bx^{k+1} =& \arg\min_{\bx}~ f(\bx^k)+\langle \nabla f(\bx^k), \bx-\bx^k\rangle + \langle \bx, \mb{q}^k\rangle \notag\\
    &+\frac{1}{2\alpha} \|\bx-\tilde{W}\bx^k\|^2,\label{eq:x_EXTRA}\\
    \mathbf{q}^{k+1} &= \mathbf{q}^k + \frac{1}{\alpha} (I-\tilde{W})\mathbf{x}^{k+1},\label{eq:u_EXTRA}
\end{align}
where $\mb{q}^k$ is the dual iterate with
\begin{equation}
    \mb{q}^0 = \frac{1}{\alpha}(I-\tilde{W})\mb{x}^0.
\end{equation}

\section{Algorithm Development}\label{sec:B_EXTRA}

\begin{figure*}[t]
\vspace{-0.1cm}
    \centering
    \begin{subfigure}[t]{0.3\textwidth}
        \centering
        \includegraphics[width=\linewidth,height=30mm]{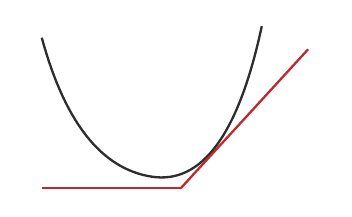}
        \caption{Polyak model.}
        \label{fig:1a}
    \end{subfigure}\hfill
    \begin{subfigure}[t]{0.3\textwidth}
        \centering
        \includegraphics[width=\linewidth,height=30mm]{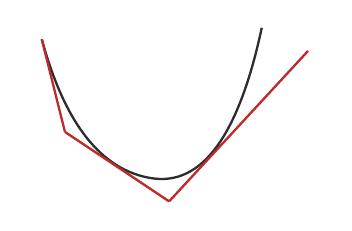}
        \caption{Cutting-plane model.}
        \label{fig:1b}
    \end{subfigure}\hfill
    \begin{subfigure}[t]{0.3\textwidth}
        \centering
        \includegraphics[width=\linewidth,height=30mm]{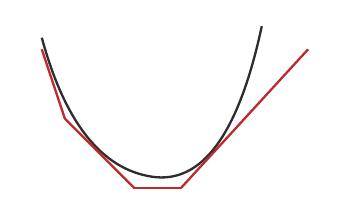}
        \caption{Polyak cutting-plane model.}
        \label{fig:1c}
    \end{subfigure}
    \caption{Surrogate function $\tilde{f}^k_i$.}
    \label{fig:model}
\end{figure*}

In the primal update \eqref{eq:x_EXTRA} of EXTRA, the function $f$ is approximated by its first order Taylor expansion

which, in general, only admits a low approximation accuracy and limits faster convergence of the algorithm. To accelerate EXTRA, we replace the first-order Taylor expansion with a more accurate approximation $\tilde{f}^k$, leading to
\begin{align}
    &\bx^{k+1}=\arg\min_{\bx}~ \tilde{f}^k(\bx) + \langle \bx, \mb{q}^k\rangle +\frac{1}{2\alpha} \|\bx-\tilde{W}\bx^k\|^2,\label{eq:x_bundle}\\
    & \mb{q}^{k+1}=\mb{q}^k+\frac{1}{\alpha}(I-\tilde{W}) \bx^{k+1}.\label{eq:q_bundle}
\end{align}
We require $\tilde{f}^k$ to have a separable structure for distributed implementation: for a set of $\tilde{f}_i^k$,
\[\tilde{f}^k(\bx)=\sum_{i=1}^n \tilde{f}_i^k(x_i).\]
Then, the algorithm can be implemented as
\begin{alignat}{2}
    x_i^{k+1}
    &= \operatorname*{arg\,min}_{x_i}\;\Big\{\tilde{f}_i^k(x_i)+ (q_i^k)^T x_i
    &\nonumber\\
    &\qquad +\frac{1}{2\alpha} \Big\|x_i-\sum_{j\in \mc{N}_i\cup\{i\}} \tilde{w}_{ij}x_j^k\Big\|^2\Big\},
    &\label{eq:x_bundle_extra}\\
    q_i^{k+1}
    &= q_i^k+\frac{1}{\alpha}\left(x_i^{k+1}-\sum_{j\in\mc{N}_i\cup\{i\}} \tilde{w}_{ij} x_j^{k+1}\right),
    &\label{eq:q_bundle_extra}
\end{alignat}
where $\mc{N}_i=\{j\mid \{i,j\}\in\mc{E}\}$ is the neighbor set of agent $i$.

We require each $\tilde{f}_i^k$ to satisfy the following assumption.
\begin{assump}\label{assu:bundle}
The surrogate function \(\tilde{f}_i^k(x_i)\) satisfies
\begin{enumerate}[label=(\alph*)]
    \item \(\tilde{f}_i^k(x_i)\) is convex;
    \item \(\tilde{f}_i^k(x_i)\ge f_i(x_i^k)+\langle \nabla f_i(x_i^k), x_i-x_i^k\rangle\) for all \(x_i\in\mbb{R}^d\);
    \item \(\tilde{f}_i^k(x_i)\le f_i(x_i)\) for all \(x_i\in \mbb{R}^d\).
\end{enumerate}
\end{assump}
Assumption \ref{assu:bundle} requires each local surrogate function to be a convex minorant of $f_i$ that dominates the first-order cutting plane at $x_i^k$. Moreover, Assumption \ref{assu:bundle} (b) and (c) imply $\tilde{f}_i^k(x_i^k)=f_i(x_i^k)$, i.e., the model is exact at $x_i^k$. Since Assumption \ref{assu:bundle} is usually assumed by bundle methods \cite{Mateo2023opti,cederberg2025asyn,iutzeler2020asynchronous}, we refer to surrogate functions satisfying Assumption \ref{assu:bundle} as a \emph{bundle model} and the algorithm \eqref{eq:x_bundle}--\eqref{eq:q_bundle} with a bundle model as the bundle EXTRA method. A detailed implementation is given in Algorithm \ref{alg:bundle_extra}.
\begin{algorithm}[h]
\caption{Bundle EXTRA}
\label{alg:bundle_extra}
\begin{algorithmic}[1]
\State\textbf{Initialization}: Determine the step-size $\alpha>0$, the mixing matrix $\tilde{W}$, and the initial variables $\mb{x}^0$.
\State Each agent $i$ shares $x_i^0$ with its neighbors $j\in \mc{N}_i=\{j\mid \{i,j\}\in\mc{E}\}$ and computes $q_i^0=\frac{1}{\alpha}(x_i^0-\sum_{j\in\mc{N}_i\cup \{i\}}\tilde{w}_{ij}x_j^0)$.
\For{$k=0,1,2,\ldots$}
    \ForAll{agents $i\in\mc{V}$}
        \State Construct the surrogate function $\tilde f_i^k$.
        \State Update the primal variable $x_i^{k+1}$ by \eqref{eq:x_bundle_extra}.
        \State Share $x_i^{k+1}$ with all its neighbors $j\in\mc{N}_i$.
        \State Update $q_i^{k+1}$ by \eqref{eq:q_bundle_extra} after receiving $x_j^{k+1}$ from
        \Statex \hspace{\algorithmicindent}\quad~ all $j\in\mc{N}_i$.
    \EndFor
\EndFor
\end{algorithmic}
\end{algorithm}

\subsection{Candidates of $\tilde{f}_i^k$}\label{ssec:candidates_tilde_f}

We provide a list of candidates for $\tilde{f}_i^k$, which incorporates historical objective function values and gradients or lower bounds of the objective function to yield a high approximation accuracy on $f_i$. Under Assumption \ref{assu:convex}, all options listed below satisfy Assumption \ref{assu:bundle} and some of them are depicted in Fig. \ref{fig:model}.
\begin{enumerate}
    \item \textbf{Polyak model:} Let $\gamma_i^f$ be a lower bound of $\min_{x_i} f_i(x_i)$. The model takes the form of
    \begin{equation*}
        \tilde{f}_i^k(x_i) = \max \{ f_i(x_i^k) + \langle \nabla f_i(x_i^k), x_i-x_i^k\rangle, \gamma_i^f\}.
    \end{equation*}
    We name this function the Polyak model since steepest descent with the Polyak step-size minimizes the above surrogate function.
   \item \textbf{Cutting-plane model:} The model takes the maximum of historical cutting planes:
    \begin{equation*}
        \tilde{f}_i^k(x_i) = \max_{t\in \mc{S}_i^k} \{ f_i(x_i^t) + \langle \nabla f_i(x_i^t), x_i-x_i^t\rangle\},
    \end{equation*}
    where $\mc{S}_i^k\subseteq [0,k]$ is an index set of historical iterates. This model is used in the cutting-plane method \cite{kelley1960cutting}.
    \item \textbf{Polyak cutting-plane model:} It takes the form of
    \begin{equation*}
        \tilde{f}_i^k(x_i) = \max_{t\in \mc{S}_i^k} \{ f_i(x_i^t) + \langle \nabla f_i(x_i^t), x_i-x_i^t\rangle, \gamma_i^f\},
    \end{equation*}
    which is the maximum of the Polyak and the cutting-plane models and is referred to as the Polyak cutting-plane model.
    
    \item\textbf{Two-cut model:} $\tilde{f}_i^0(x_i)=f_i(x_i^0)+\langle \nabla f_i(x_i^0), x_i-x_i^0\rangle$. For each $k\ge 1$,
    \begin{equation*}
    \begin{split}
        \tilde{f}_i^k(x_i)=\max\{\ell_i^{k-1,k}, f_i(x_i^k)+\langle \nabla f_i(x_i^k), x_i-x_i^k\rangle\},
    \end{split}
    \end{equation*}
    where $\ell_i^{k-1,k}=\tilde{f}_i^{k-1}(x_i^k) + \langle v_i^k, x_i-x_i^k\rangle$, the subgradient $v_i^k\in \partial \tilde{f}_i^{k-1}(x_i^k)$. This model takes the maximum of the cutting planes of both $f_i$ and $\tilde{f}_i^{k-1}$ at $x_i^k$. 
\end{enumerate}
The above four models approximate $f_i$ by a piecewise linear function defined as the maximum of multiple affine functions, which is convex and yields a substantially tighter lower bound than $f(\bx^k) + \langle \nabla f(\bx^k), \bx - \bx^k \rangle$ used in EXTRA. This tighter approximation potentially yields faster convergence.

\begin{remark}
    An existing work \cite{wang2017decentralized} also proposes a decentralized bundle-type method for solving problem \eqref{eq:prob}. However, it considers the non-smooth setting (using subgradients), only allows for the two-cut model, and includes no theoretical convergence analysis. In contrast, we consider the smooth objective function, allow for a broader range of surrogate functions, and theoretically analyzed the convergence (see Section \ref{sec:Con_analysis}).
\end{remark}

\subsection{Subproblem in the primal update \eqref{eq:x_bundle_extra}}

When using the surrogate functions in Section \ref{ssec:candidates_tilde_f}, the subproblems in the primal update \eqref{eq:x_bundle_extra} can be transformed into quadratic problems and can be solved at low cost.

We first simplify the subproblem in \eqref{eq:x_bundle_extra} as
\begin{equation}\label{eq:primal_sub}
    \min _{x\in\mbb{R}^d} \max_{j=1,\ldots,m} \{a_j^Tx + b_j\} + \frac{1}{2\alpha} \|x-c\|^2,
\end{equation}
where the subscript $i$ is ignored for simplicity, $m$ is the number of affine functions in $\tilde{f}_i^k$, $a_j^Tx+b_j$ is the $j$th affine function, and $c=\sum_{j\in \mc{N}_i\cup\{i\}} \tilde{w}_{ij}x_j^k-\alpha q_i^k$. Further, the subproblem \eqref{eq:primal_sub} can be equivalently reformulated as a quadratic program:
\begin{equation}\label{eq:linear_con}
\begin{split}
     \underset{x\in\mbb{R}^d, ~\xi\in\mbb{R}}{\operatorname{minimize}}~~&\xi + \frac{1}{2\alpha} \|x-c\|^2\\
    \operatorname{subject~to}~ &\xi -a_j^Tx-b_j\ge 0, \quad j=1,\ldots,m.
\end{split}
\end{equation}
When $d$ is small, directly solving the above quadratic problem is not expensive. 

If $d$ is large, since the variable dimension of problem \eqref{eq:linear_con} is $m$, which is typically small (around $5 - 20$ in practical implementations), a more efficient way is to solve problem \eqref{eq:linear_con} by solving its low-dimensional dual problem
\begin{equation}\label{eq:sub_dual}
\begin{aligned}
    \underset{\lambda\in \mbb{R}^{m}}{\operatorname{maximize}} ~~& h(\lambda)\\
    \operatorname{subject~to} ~ & \mb{1}^T \lambda=1,\\
                              & \lambda\ge 0.
\end{aligned}
\end{equation}
where
\[
\begin{aligned}
    h(\lambda)
    &=\operatorname{inf}_{x} ~\left\{\frac{1}{2\alpha} \|x-c\|^2 + \lambda^TAx\right\}+\lambda^T b\\
    &=-\frac{\alpha\|A^T\lambda\|^2}{2}+\lambda^T(Ac+b),
\end{aligned}
\]
is the dual objective function, $A=(a_1, \ldots, a_m)^T\in \mbb{R}^{m\times d}$, and $b=(b_1,\ldots, b_m)^T\in \mbb{R}^m$. For any optimum $\lambda^\star$ of the dual problem \eqref{eq:sub_dual}, $x^\star=c-\alpha A^T\lambda^\star$ is an optimum to problem \eqref{eq:primal_sub} (see~\cite[Section 5.5]{boyd2004con}). Note that problem \eqref{eq:sub_dual} is a quadratic program with variable dimension $m$ and the constraint set is the simplex where the projection onto it can be performed at a low cost of $O(m)$ \cite{condat2016fast}. Therefore, problem \eqref{eq:sub_dual} can be solved quickly. To test the practical effectiveness of this approach, we apply FISTA to solve problem \eqref{eq:sub_dual} with randomly generated $A,b,c$ where $d=100,000$ and $m=15$, which takes only $19\sim 40$ iterations to reach a high accuracy of $10^{-7}$ ($0.1\sim 0.2$ second on a PC with the Apple M3 8-core CPU).

\section{Convergence Analysis}\label{sec:Con_analysis}
This section analyzes the convergence of the bundle EXTRA method.
\begin{thm}\label{thm:con_analy}
    Suppose that Assumptions \ref{assu:convex}-\ref{assu:bundle} hold. Let the sequences $\{\bx^k\}$ be generated by Algorithm \ref{alg:bundle_extra}. If \begin{equation}\label{eq:step}
    \alpha\le \lambda_{\min}(\tilde{W})/L,
    \end{equation}
    then the KKT residuals satisfy
    \begin{equation}\label{eq:summable1}
    \begin{aligned}
        &\sum_{k=1}^{\infty}\Big(\frac{1}{\alpha}(\bx^k)^T(I-\tilde{W})\bx^k+\frac{1}{L}\|\nabla f(\bx^k)+\mb{q}^\star\|^2\Big)\\
        &\le \frac{1}{\alpha}\|\mb{x}^0-\mb{x}^\star\|_{\tilde{W}}^2+\alpha\|\mb{q}^0+\nabla f(\bx^\star)\|_{(I-\tilde{W})^\dag}^2<\infty,
    \end{aligned}
    \end{equation}
    where $\bx^\star$ is an optimum to problem \eqref{eq:prob}.
\end{thm}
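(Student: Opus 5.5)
The plan is a Lyapunov/telescoping argument around a primal–dual optimal pair $(\bx^\star,\mb{q}^\star)$, where $\mb{q}^\star=-\nabla f(\bx^\star)$. First I fix the optimality system. Under Assumptions \ref{assu:convex}--\ref{assu:connected} there is an optimum $\bx^\star=\mb{1}(x^\star)^T$. Since $\operatorname{Null}(I-\tilde W)=\operatorname{span}(\mb{1})$, we have $\operatorname{Range}(I-\tilde W)=\{\mb{y}:\mb{1}^T\mb{y}=0\}$. The condition $\sum_i\nabla f_i(x^\star)=0$ then gives $\nabla f(\bx^\star)\in\operatorname{Range}(I-\tilde W)$, so $\mb{q}^\star:=-\nabla f(\bx^\star)$ satisfies $\nabla f(\bx^\star)+\mb{q}^\star=0$ and $(I-\tilde W)\bx^\star=0$. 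Moreover, every $\mb{q}^k-\mb{q}^0$ lies in $\operatorname{Range}(I-\tilde W)$ by \eqref{eq:q_bundle}, and so does $\mb{q}^0$. Hence $\mb{q}^k-\mb{q}^\star\in\operatorname{Range}(I-\tilde W)$, which makes the seminorm $\|\cdot\|_{(I-\tilde W)^\dagger}$ a genuine norm on these differences.

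Second, I write the optimality condition of \eqref{eq:x_bundle}:
\[
0\in\partial\tilde f^k(\bx^{k+1})+\mb{q}^k+\tfrac1\alpha(\bx^{k+1}-\tilde W\bx^k).
\]
Let $\mb{g}^{k+1}\in\partial\tilde f^k(\bx^{k+1})$ be the corresponding subgradient. Substituting $\mb{q}^k=\mb{q}^{k+1}-\frac1\alpha(I-\tilde W)\bx^{k+1}$ gives
\[
\mb{g}^{k+1}+\mb{q}^{k+1}+\tfrac1\alpha\tilde W(\bx^{k+1}-\bx^k)=0.
\]
I take the inner product of this identity with $\bx^{k+1}-\bx^\star$ and use $\mb{g}^\star+\mb{q}^\star=0$ with $\mb{g}^\star=\nabla f(\bx^\star)$. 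The dual term becomes
\[
\langle\mb{q}^{k+1}-\mb{q}^\star,\bx^{k+1}-\bx^\star\rangle=\alpha\langle\mb{q}^{k+1}-\mb{q}^\star,\mb{q}^{k+1}-\mb{q}^k\rangle_{(I-\tilde W)^\dagger}.
\]
This uses $(I-\tilde W)(\bx^{k+1}-\bx^\star)=\alpha(\mb{q}^{k+1}-\mb{q}^k)$ and the range property. Applying the three-point identity to both the $\tilde W$-weighted primal term and the dual term, I expect
\[
V^{k+1}\le V^k-\tfrac1\alpha\|\bx^{k+1}-\bx^k\|_{\tilde W}^2-\alpha\|\mb{q}^{k+1}-\mb{q}^k\|^2_{(I-\tilde W)^\dagger}-2\langle\mb{g}^{k+1}-\nabla f(\bx^\star),\bx^{k+1}-\bx^\star\rangle,
\]
with $V^k=\frac1\alpha\|\bx^k-\bx^\star\|_{\tilde W}^2+\alpha\|\mb{q}^k-\mb{q}^\star\|^2_{(I-\tilde W)^\dagger}$. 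Note that $\alpha\|\mb{q}^{k+1}-\mb{q}^k\|^2_{(I-\tilde W)^\dagger}=\frac1\alpha(\bx^{k+1})^T(I-\tilde W)\bx^{k+1}$, which is exactly the first summand in \eqref{eq:summable1}. Also $V^0$ equals the right-hand side of \eqref{eq:summable1}, since $\mb{q}^0-\mb{q}^\star=\mb{q}^0+\nabla f(\bx^\star)$.

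The main obstacle is lower-bounding the cross term $\langle\mb{g}^{k+1}-\nabla f(\bx^\star),\bx^{k+1}-\bx^\star\rangle$. The difficulty is that $\mb{g}^{k+1}$ is a subgradient of the model, not the true gradient, and the bound must produce $\frac1L\|\nabla f(\bx^{k+1})-\nabla f(\bx^\star)\|^2$ (note $\nabla f(\bx^{k+1})+\mb{q}^\star=\nabla f(\bx^{k+1})-\nabla f(\bx^\star)$), with the leftover absorbed by $\frac1\alpha\|\bx^{k+1}-\bx^k\|_{\tilde W}^2$. I would work with function values rather than the monotone inner product. From Assumption \ref{assu:bundle}, $\tilde f^k$ is a convex minorant of $f$ that is exact at $\bx^k$, so convexity of $\tilde f^k$ and (c) give
\[
\langle\mb{g}^{k+1},\bx^{k+1}-\bx^\star\rangle\ge\tilde f^k(\bx^{k+1})-f(\bx^\star).
\]
Assumption \ref{assu:bundle}(b) then gives $\tilde f^k(\bx^{k+1})\ge f(\bx^k)+\langle\nabla f(\bx^k),\bx^{k+1}-\bx^k\rangle$, which by $L$-smoothness is at least $f(\bx^{k+1})-\frac L2\|\bx^{k+1}-\bx^k\|^2$.

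Combining with the cocoercive inequality
\[
f(\bx^{k+1})-f(\bx^\star)-\langle\nabla f(\bx^\star),\bx^{k+1}-\bx^\star\rangle\ge\tfrac1{2L}\|\nabla f(\bx^{k+1})-\nabla f(\bx^\star)\|^2,
\]
the cross term, after the factor 2, is at least $\frac1L\|\nabla f(\bx^{k+1})+\mb{q}^\star\|^2-L\|\bx^{k+1}-\bx^k\|^2$. The leftover is dominated by $\frac1\alpha\|\bx^{k+1}-\bx^k\|_{\tilde W}^2\ge\frac{\lambda_{\min}(\tilde W)}{\alpha}\|\bx^{k+1}-\bx^k\|^2\ge L\|\bx^{k+1}-\bx^k\|^2$, which is precisely where \eqref{eq:step} enters. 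Summing the resulting inequality from $k=0$ and telescoping $V^k\ge0$ yields \eqref{eq:summable1}. I would double-check the factor bookkeeping (the factor 2 from the three-point identities), since it determines whether the constant $\frac1L$ and the condition \eqref{eq:step} come out exactly as stated.
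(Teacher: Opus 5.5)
Your proposal is correct and is essentially the paper's proof. The paper changes variables to $\mb{u}^k=H^\dagger\mb{q}^k$ with $H=(I-\tilde W)^{1/2}$, so its Lyapunov term $\alpha\|\mb{u}^k-\mb{u}^\star\|^2$ is exactly your $\alpha\|\mb{q}^k-\mb{q}^\star\|^2_{(I-\tilde W)^\dagger}$, and it bounds $\langle \mb{g}^{k+1}-\nabla f(\bx^\star),\bx^{k+1}-\bx^\star\rangle$ by the same chain you describe (model convexity, the minorant and cutting-plane properties of $\tilde f^k$, the descent lemma, and cocoercivity). Your factor bookkeeping checks out: your per-step inequality coincides with the paper's, and the leftover $L\|\bx^{k+1}-\bx^k\|^2$ is absorbed precisely when $\alpha\le\lambda_{\min}(\tilde W)/L$.
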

\begin{pf}
See Appendix.
\end{pf}
In \eqref{eq:summable1}, the terms $(\bx^k)^T(I-\tilde{W})\bx^k$ and $\|\nabla f(\bx^k)+\mb{q}^\star\|^2$ measure the KKT residual of problem \eqref{eq:prob}. To see this, note that problem \eqref{eq:prob} is equivalent to
\begin{equation}\label{eq:compact_prob}
\begin{split}
    \underset{\bx\in\mbb{R}^{n\times d}}{\operatorname{minimize}}~~&~f(\bx)\\
    \operatorname{subject~to}~&~(I-\tilde{W})^{\frac{1}{2}}\bx=\mb{0},
\end{split}
\end{equation}
where the constraint implies that all $x_i$'s are identical due to Assumption \ref{asm:weight_matrices}(c) and $\operatorname{Null}(I-\tilde{W})=\operatorname{Null}((I-\tilde{W})^{\frac{1}{2}})$. By the KKT conditions of problem \eqref{eq:compact_prob}, a point $\tilde{\bx}$ is optimal if and only if there exists $\tilde{\mb{q}}\in \operatorname{Range}((I-\tilde{W})^{\frac{1}{2}})$ such that
\begin{align}
    & (I-\tilde{W})^{\frac{1}{2}}\tilde{\bx}=\mb{0},\label{eq:KKT_x}\\
    & \nabla f(\tilde{\bx}) + \tilde{\mb{q}} = \mb{0}\label{eq:KKT_q}.
\end{align}
Since $\bx^\star$ is optimal to problem \eqref{eq:compact_prob}, we have
\[\nabla f(\bx^\star)\in \operatorname{Range}((I-\tilde{W})^{\frac{1}{2}}).\]
Therefore, $(\bx^k)^T(I-\tilde{W})\bx^k=0$ implies \eqref{eq:KKT_x} with $\tilde{\bx}=\bx^k$, and $\|\nabla f(\bx^k)-\nabla f(\bx^\star)\|^2=0$ indicates \eqref{eq:KKT_q} with $\tilde{\bx}=\bx^k$ and $\tilde{\mb{q}}=-\nabla f(\bx^\star)$.

Next, we show that \eqref{eq:summable1} implies the convergence of the KKT residuals $(\bx^k)^T(I-\tilde{W})\bx^k$ and $\|\nabla f(\bx^k)+\mb{q}^\star\|^2$ to $0$.

\begin{cor}\label{cor:kkt_residual_rates}
Suppose that all the conditions in Theorem \ref{thm:con_analy} hold. Let \(\{\bx^k\}\) be generated by Algorithm \ref{alg:bundle_extra}. It holds that
\begin{align*}\label{eq:conv}
 & \lim_{k\to +\infty} (\bx^k)^T(I-\tilde{W})\bx^k = 0,\\
 & \lim_{k\to +\infty} \|\nabla f(\bx^k)+\mb{q}^\star\| = 0.
\end{align*}
Moreover, 
\begin{align*}
&\min_{t\le k} (\bx^t)^T(I-\tilde{W})\bx^t  = o\left(\frac{1}{k}\right),\\
&\min_{t\le k} \|\nabla f(\bx^t)-\nabla f(\bx^\star)\|^2 = o\left(\frac{1}{k}\right),\\
&\frac{1}{k}\sum_{t=1}^k (\bx^t)^T(I-\tilde{W})\bx^t = O\left(\frac{1}{k}\right),\\
&\frac{1}{k}\sum_{t=1}^k \|\nabla f(\bx^t)-\nabla f(\bx^\star)\|^2 = O\left(\frac{1}{k}\right).
\end{align*}
\end{cor}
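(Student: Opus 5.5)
The plan is to deduce everything from the summability bound \eqref{eq:summable1} of Theorem \ref{thm:con_analy}, using elementary facts about nonnegative summable series. Set
\[
a_k:=\frac{1}{\alpha}(\bx^k)^T(I-\tilde{W})\bx^k,\qquad b_k:=\frac{1}{L}\|\nabla f(\bx^k)+\mb{q}^\star\|^2 .
\]
Since $I-\tilde{W}\succeq 0$ by Assumption \ref{asm:weight_matrices}(d) (from $\frac{I+\tilde W}{2}\succeq \tilde W$), both sequences are nonnegative. The argument should be carried out columnwise in $\bx$, or equivalently with the trace, so that nonnegativity holds for matrix iterates. Theorem \ref{thm:con_analy} then gives $\sum_{k\ge1}(a_k+b_k)\le C<\infty$, where $C$ is the right-hand side of \eqref{eq:summable1}. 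Hence $\sum_k a_k<\infty$ and $\sum_k b_k<\infty$ separately. Throughout, I will use $\mb{q}^\star=-\nabla f(\bx^\star)$, which is the identification made in the discussion after Theorem \ref{thm:con_analy}. With it, $\|\nabla f(\bx^k)+\mb{q}^\star\|=\|\nabla f(\bx^k)-\nabla f(\bx^\star)\|$.

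\textbf{Step 1 (limits).} The terms of a convergent series of nonnegative numbers tend to zero. So $a_k\to0$ and $b_k\to0$, and multiplying by the positive constants $\alpha$ and $L$ gives the two limits.

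\textbf{Step 2 (averaged $O(1/k)$ rates).} For every $k$,
\[
\frac1k\sum_{t=1}^k (\bx^t)^T(I-\tilde W)\bx^t\le \frac{\alpha C}{k},
\]
and the gradient term is bounded in the same way by $LC/k$.

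\textbf{Step 3 (min-iterate $o(1/k)$ rates).} This is the only step that needs a small argument, and it uses the standard tail-sum trick. Let $m_k=\min_{t\le k}a_t$; it is nonincreasing. For $k\ge2$,
\[
\lceil k/2\rceil\, m_k\le \sum_{t=\lfloor k/2\rfloor+1}^{k} a_t\le \sum_{t>\lfloor k/2\rfloor} a_t .
\]
The right-hand side is a tail of a convergent series, so it tends to zero. Therefore $k\,m_k\le 2\sum_{t>\lfloor k/2\rfloor}a_t\to0$, that is, $m_k=o(1/k)$. The identical argument applied to $b_t$ gives the gradient statement.

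I do not expect a real obstacle. The only points needing care are two. First, confirming nonnegativity of the quadratic form through $I-\tilde W\succeq0$. Second, making the identification $\mb{q}^\star=-\nabla f(\bx^\star)$ explicit, so that the residual in Theorem \ref{thm:con_analy} matches the $\|\nabla f(\bx^t)-\nabla f(\bx^\star)\|^2$ appearing in the corollary.
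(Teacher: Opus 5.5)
Your proposal is correct and follows the paper's argument: the paper also derives the corollary directly from the summability bound \eqref{eq:summable1}. It cites \cite[Proposition 3.4]{shi2015extra} for the same three facts about nonnegative summable sequences that you prove by hand (vanishing terms, an $O(1/k)$ running average, an $o(1/k)$ running minimum), and your tail-sum argument is the standard proof of the last one. Your explicit checks of nonnegativity via $I-\tilde{W}\succeq 0$ (taken columnwise or via the trace) and of the identification $\mb{q}^\star=-\nabla f(\bx^\star)$ fill in details the paper leaves implicit.
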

\begin{pf}
By \eqref{eq:summable1} in Theorem  \ref{thm:con_analy} and \cite[Proposition 3.4]{shi2015extra}, we obtain the results.
\end{pf}

\section{Numerical Experiment}\label{sec:Num_ex}
To evaluate the performance of the proposed algorithm, we consider decentralized least-squares problems of the form
\begin{equation}\label{eq:least}
\begin{split}
\min_{x \in \mathbb{R}^d}\frac{1}{2n}\sum_{i=1}^n \|P_i x-q_i\|^2,
\end{split}
\end{equation}
which is equivalent to problem \eqref{eq:prob} with $f_i(x)=\frac{1}{2n}\|P_ix-q_i\|^2$. All entries in the matrices $P_i\in\mathbb{R}^{\eta\times d}$ and vectors $q_i\in\mathbb{R}^{\eta}$ are randomly generated from Gaussian distributions: $P_i^{lj} \sim \mc{N}(2, 2)$ and $q_i^{l} \sim \mc{N}(1, 0.5)$.

The parameter settings are as follows:  We set $n=20$, $d=100$ and $\eta=6$. The communication network is randomly generated with $32$ edges and is guaranteed to be connected. We choose the mixing matrix $W$ as the Metropolis weights described in Section \ref{ssec:extra}. We set the bundle model in bundle EXTRA \eqref{eq:x_bundle_extra} as  the following cutting-plane model:
\[\tilde{f}_i^k(x_i) = \max_{\max(0, k-m)\le t\le k} f_i(x_i^t)+\langle\nabla f_i(x_i^t), x_i-x_i^t\rangle,\]
where $m$ is a non-negative integer.

We validate the effectiveness of the proposed bundle EXTRA through comparison with EXTRA. We first compare the convergence speed where the step-sizes in both methods are fine-tuned for better performance. Then, we test the robustness of methods with respect to the step-size. The results are displayed in Figs \ref{fig:con}--\ref{fig:robust}, from which we make the following observations: First, from Fig.~\ref{fig:con}, the proposed bundle EXTRA method outperforms EXTRA and the advantage becomes more evident when the number of historical cutting planes increases. One explanation for this acceleration is that the increased number of historical cutting planes enhances the approximation of the surrogate function, which further accelerates the convergence. Second, Fig.~\ref{fig:robust} shows that bundle EXTRA can converge with a much wider range of step-sizes compared with EXTRA, especially when $m$ is large, which indicates a higher robustness of the bundle EXTRA in step-size selection.

\begin{figure}[t]
    \centering
    \includegraphics[width=0.95\columnwidth]{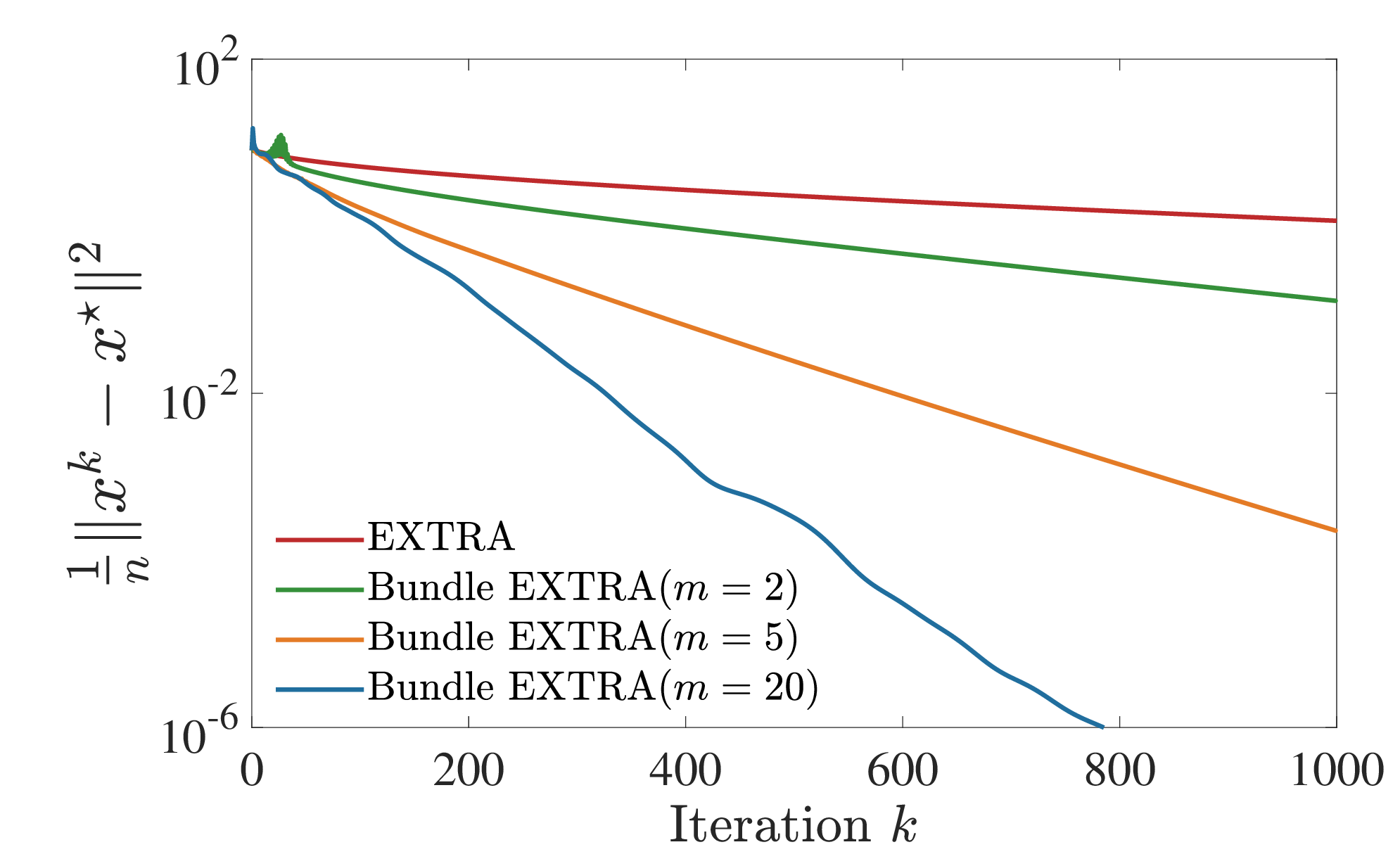}
    \caption{Convergence of bundle EXTRA and EXTRA.}
    \label{fig:con}
\end{figure}
\begin{figure}[t]
    \centering
    \includegraphics[width=0.95\columnwidth]{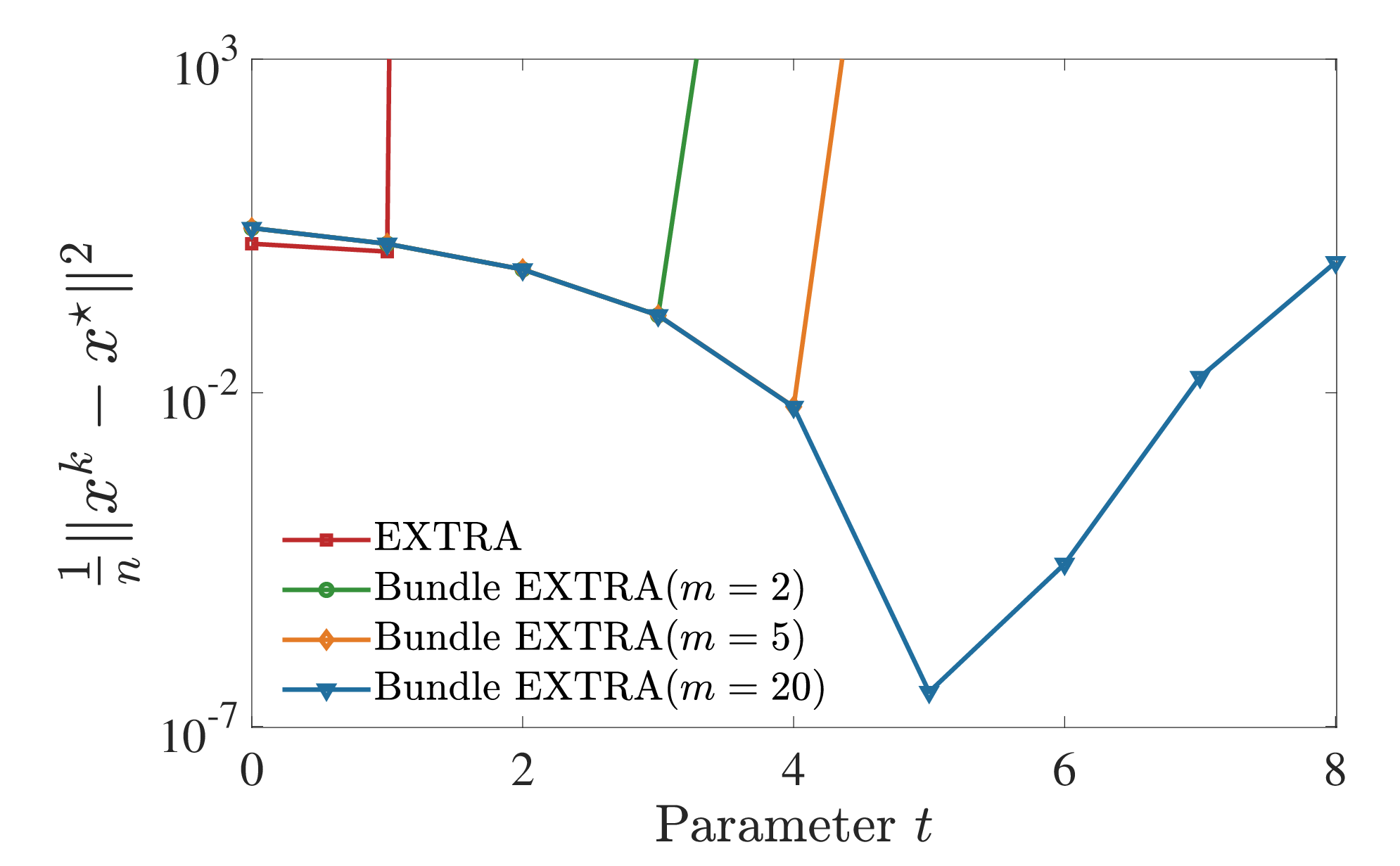}
    \caption{The parameter robustness of bundle EXTRA and EXTRA methods with step-size $\alpha= 0.003\times 2^t$.}
    \label{fig:robust}
\end{figure}
\section{Conclusion}\label{sec:Conclu}
 We have developed a bundle EXTRA method for decentralized consensus optimization. This method incorporates a bundle-based multi-cut model into the EXTRA framework, which improves the approximation accuracy of the primal objective functions while maintaining a fully decentralized structure. We have established global convergence of the KKT residual under mild assumptions. Numerical results demonstrate that our method not only converges faster, but also exhibits higher robustness in step-size selection compared to EXTRA. Future work will investigate adaptive selection of the number of cuts and more refined surrogate functions to further improve performance.

\section*{Appendix: Proof of Theorem \ref{thm:con_analy}}\label{appendix:pfthem}

We first rewrite the bundle EXTRA method in an equivalent form: Define $H=(I-\tilde{W})^{\frac{1}{2}}$. By letting $\mb{u}^k=H^\dag \mb{q}^k$, the algorithm can be equivalently rewritten as
\begin{align}
    &\bx^{k+1}=\arg\min_{\bx}~ \tilde{f}^k(\bx) + \langle \bx, H\mb{u}^k\rangle +\frac{1}{2\alpha} \|\bx-\tilde{W}\bx^k\|^2,\label{eq:x_bundle_u}\\
    & \mb{u}^{k+1}=\mb{u}^k+\frac{1}{\alpha}H\bx^{k+1},\label{eq:u_bundle}
\end{align}
where $\mb{u}^0=\frac{1}{\alpha}H\bx^0$. Since $\mb{q}^0\in \operatorname{Range}(H)$ and $\mb{q}^{k+1}-\mb{q}^k\in \operatorname{Range}(H)$ by \eqref{eq:q_bundle}, we have
\[\mb{q}^k\in \operatorname{Range}(H),\quad\forall k\ge 0,\]
so that each $\mb{q}^k$ corresponds to a unique $\mb{u}^k$. 

By the optimality conditions of problem \eqref{eq:compact_prob}, for some $\mb{u}^\star$, it holds that
\begin{equation}\label{eq:optimal}
    \left\{\begin{aligned}&\nabla f(\bx^\star)+H\mb{u}^\star=\mb{0},\\
    & H\bx^\star=\mb{0}.\end{aligned}\right. 
\end{equation}
According to the first-order optimality conditions of \eqref{eq:x_bundle_u}, for some $\mb{g}^{k+1}\in \partial \tilde{f}^k(\bx^{k+1})$, we have
    \begin{equation}\label{eq:opti_k1}
        \begin{split}
             \mb{g}^{k+1}+H\mb{u}^k+\frac{1}{\alpha} (\bx^{k+1}-\tilde{W}\bx^k)=\mb{0}.
        \end{split}
    \end{equation}
Subtracting the first equation in \eqref{eq:optimal} from \eqref{eq:opti_k1} gives
\begin{equation*}
    \mb{g}^{k+1}-\nabla f(\bx^\star) + H(\mb{u}^k-\mb{u}^\star) +\frac{1}{\alpha} (\bx^{k+1}-\tilde{W} \bx^k)=\mb{0},
\end{equation*}
or equivalently,
\begin{equation} \label{eq:Hu}
H(\mb{u}^k-\mb{u}^\star)=-(\mb{g}^{k+1}-\nabla f(\bx^\star))-\frac{1}{\alpha} (\bx^{k+1}-\tilde{W} \bx^k).
\end{equation}
By \eqref{eq:u_bundle},
\begin{equation}\label{eq:uterm}
\begin{split}
    \|\mb{u}^{k+1}-\mb{u}^\star\|^2=&\|\mb{u}^k-\mb{u}^\star + \frac{1}{\alpha}H \bx^{k+1}\|^2\\
    =&\|\mb{u}^k-\mb{u}^\star\|^2 +\frac{1}{\alpha^2} \|H\bx^{k+1}\|^2\\
    &+\frac{2}{\alpha}\langle \mb{u}^k-\mb{u}^\star, H\bx^{k+1}\rangle \\
    =&\|\mb{u}^k-\mb{u}^\star\|^2 +\|\mb{u}^{k+1}-\mb{u}^k\|^2\\
    &+\frac{2}{\alpha}\langle \mb{u}^k-\mb{u}^\star,H\bx^{k+1}\rangle.
\end{split}
\end{equation}

By \eqref{eq:Hu} and $H\bx^\star=\mb{0}$, we have
\begin{equation}\label{eq:Cross_term}
\begin{split}
    &\langle \mb{u}^k-\mb{u}^\star, H\bx^{k+1}\rangle\\ =&\langle \mb{u}^k-\mb{u}^\star,H(\bx^{k+1} -\bx^\star)\rangle\\
    =&\langle H(\mb{u}^k-\mb{u}^\star), \bx^{k+1}-\bx^\star\rangle\\
    =&-\langle \mb{g}^{k+1}-\nabla f(\bx^\star), \bx^{k+1}-\bx^\star\rangle\\
    & -\frac{1}{\alpha}\langle\bx^{k+1}-\tilde{W} \bx^k,\bx^{k+1}-\bx^\star\rangle.
\end{split}
\end{equation}
For the terms in the right-hand side of \eqref{eq:Cross_term}, we have
\begin{align}\label{eq:Wxterm}
         &-\langle \bx^{k+1}-\tilde{W} \bx^k,\bx^{k+1}-\bx^\star\rangle\notag\\
         =&-\langle \bx^{k+1}-\tilde{W} \bx^k-\tilde{W} \bx^{k+1}+\tilde{W}\bx^{k+1},\bx^{k+1}-\bx^\star\rangle\notag\\
        =&\langle \tilde{W}(\bx^{k+1}-\bx^k),\bx^\star-\bx^{k+1}\rangle-\|\bx^{k+1}-\bx^\star\|_{H^2}^2\notag\\
        =&\frac{1}{2}\big(\|\bx^k-\bx^\star\|_{\tilde{W}}^2-\|\bx^{k+1}-\bx^\star\|_{\tilde{W}}^2\big)\\
        &-\frac{1}{2}\|\bx^k-\bx^{k+1}\|_{\tilde{W}}^2-\|\bx^{k+1}-\bx^\star\|_{H^2}^2,\notag
\end{align}
 where the last step follows from the basic equality \(2\langle b-a, c-b\rangle=\|c-a\|^2-\|c-b\|^2-\|a-b\|^2\),
and by Assumption \ref{assu:bundle}, the $L$-smoothness of $f$, and \cite[Theorem 2.1.5]{nesterov2018lectures},
 \begin{equation}\label{eq:gxterm}
    \begin{split}
        &-\langle \mb{g}^{k+1}-\nabla f(\bx^\star), \bx^{k+1}-\bx^\star\rangle\\
       \le& \tilde{f}^k(\bx^\star)-\tilde{f}^k(\bx^{k+1})-\langle \nabla f(\bx^\star), \bx^\star-\bx^{k+1}\rangle\\
        \le& f(\bx^\star)-f(\bx^k)-\langle \nabla f(\bx^k), \bx^{k+1}-\bx^k\rangle\\
        &-\langle \nabla f(\bx^\star), \bx^\star-\bx^{k+1}\rangle\\
        \le& f(\bx^\star)-f(\bx^{k+1})+\frac{L\|\bx^{k+1}-\bx^k\|^2}{2}\\
        &-\langle \nabla f(\bx^\star), \bx^\star-\bx^{k+1}\rangle\\
        \le& \frac{L\|\bx^{k+1}-\bx^k\|^2}{2}-\frac{1}{2L}\|\nabla f(\bx^{k+1})-\nabla f(\bx^\star)\|^2.
    \end{split}
\end{equation}
Substituting \eqref{eq:Wxterm} and \eqref{eq:gxterm} into \eqref{eq:Cross_term} and using \[\|\bx^{k+1}-\bx^\star\|_{H^2}^2=\alpha^2\|\mb{u}^{k+1}-\mb{u}^k\|^2,\]
we have
\begin{align*}
        &\langle \mb{u}^k-\mb{u}^\star, H\bx^{k+1}\rangle\\
        \le & \frac{1}{2\alpha} \big(\|\bx^\star-\bx^k\|_{\tilde{W}}^2-\|\bx^{k+1}-\bx^\star\|_{\tilde{W}}^2-\|\bx^k-\bx^{k+1}\|_{\tilde{W}}^2\big)\\
  &-\alpha\|\mb{u}^{k+1}-\mb{u}^k\|^2-\frac{1}{2L}\|\nabla f(\bx^{k+1})-\nabla f(\bx^\star)\|^2\\
  &+\frac{L\|\bx^{k+1}-\bx^k\|^2}{2},
\end{align*}
which, together with \eqref{eq:uterm}, yields
\begin{equation}\label{eq:rho_con}
\begin{split}
        &\alpha\|\mb{u}^{k+1}-\mb{u}^\star\|^2\\
  \le &\alpha\|\mb{u}^k-\mb{u}^\star\|^2 -\alpha\|\mb{u}^{k+1}-\mb{u}^k\|^2 + L\|\bx^{k+1}-\bx^k\|^2\\
   & +\frac{1}{\alpha}\big(\|\bx^\star-\bx^k\|_{\tilde{W}}^2-\|\bx^{k+1}-\bx^\star\|_{\tilde{W}}^2\big)\\
   & -\frac{1}{\alpha}\|\bx^k-\bx^{k+1}\|_{\tilde{W}}^2
  - \frac{1}{L}\|\nabla f(\bx^{k+1})-\nabla f(\bx^\star)\|^2.
\end{split}
\end{equation}

Define 
\[ Q =
\begin{pmatrix}
\alpha I & 0 \\
0 & \frac{1}{\alpha}\tilde{W}
\end{pmatrix},
\]
and let \(\mb{z}^k=((\mb{u}^k)^T, (\bx^k)^T)^T\). We rearrange terms in \eqref{eq:rho_con} and obtain
 \begin{equation}
 \begin{split}
     &\|\mb{z}^k-\mb{z}^\star\|^2_Q-\|\mb{z}^{k+1}-\mb{z}^\star\|^2_Q\\
     =&\frac{1}{\alpha}\|\bx^k-\bx^\star\|_{\tilde{W}}^2+ \alpha \|\mb{u}^k-\mb{u}^\star\|^2 \\
     &-\frac{1}{\alpha}\|\bx^{k+1}-\bx^\star\|_{\tilde{W}}^2-\alpha\|\mb{u}^{k+1}-\mb{u}^\star\|^2 \\
     \ge& \|\bx^k-\bx^{k+1}\|^2_{\tilde{W}/\alpha-LI} +\alpha\|\mb{u}^{k+1}-\mb{u}^k\|^2 \\
     &+ \frac{1}{L}\|\nabla f(\bx^{k+1})-\nabla f(\bx^\star)\|^2.
\end{split}
 \end{equation}
Since $\alpha\le \frac{\lambda_{\min}(\tilde{W})}{L}$ as assumed in \eqref{eq:step}, it holds that
\begin{equation}\label{eq:summable}
    \begin{split}
        &\sum_{t=0}^{k} \big(\|\bx^{t+1}-\bx^t\|_{{\tilde{W}}/\alpha-LI}^2 +\alpha\|\mb{u}^{t+1}-\mb{u}^t\|^2 \\
        &+ \frac{1}{L}\|\nabla f(\bx^{t+1})-\nabla f(\bx^\star)\|^2\big) \le \|\mb{z}^0-\mb{z}^\star\|^2_Q<\infty.
    \end{split}
\end{equation}
Moreover, by \eqref{eq:optimal}, we have $\mb{u}^\star=-H^\dag\nabla f(\bx^\star)$, which, together with $\mb{u}^0=H^\dag\mb{q}^0$, yields
\[\mb{u}^0-\mb{u}^\star = H^\dag H^\dag(\mb{q}^0+\nabla f(\bx^\star)).\]
Substituting the above equation into \eqref{eq:summable} gives \eqref{eq:summable1}. This completes the proof of Theorem \ref{thm:con_analy}.


\bibliographystyle{ieeetran}
\bibliography{reference}

\end{document}